\pgfplotsset{compat=1.18}
\newcommand{\C}{\mathbb{C}}
\newtheorem{thm}{Theorem}[section]
\newtheorem{cor}[thm]{Corollary}
\newtheorem{conj}[thm]{Conjecture}
\theoremstyle{definition}
\newtheorem{remark}[thm]{Remark}
\numberwithin{equation}{section}
\title[Dual Smale for odd polynomials]{Dual Smale's mean value conjecture for odd polynomials}
\author[Q.~Tang]{Quanyu Tang}
\address{School of Mathematics and Statistics, Xi'an Jiaotong University, Xi'an 710049, P. R. China}
\email{tang\_quanyu@163.com}
\subjclass[2020]{Primary 30C10.}
\keywords{Smale's mean value conjecture, dual conjecture, critical point, odd polynomial}
\begin{document}

\begin{abstract}
We prove Dual Smale's mean value conjecture for all odd polynomials with nonzero linear term. 
Precisely, if $P$ is an odd polynomial of degree $d\ge3$ with $P(0)=0$ and $P'(0)=1$, then there exists a critical point $\zeta$ of $P$ such that 
$$
\left|\frac{P(\zeta)}{\zeta}\right| \ge \frac1d.
$$This result can be regarded as a dual counterpart of T.~W.~Ng's theorem on Smale's mean value conjecture for odd polynomials with nonzero linear term [J. Aust. Math. Soc. 75 (2003), 409--411].
\end{abstract}

\maketitle

\section{Introduction}
Throughout this paper, we consider complex polynomials of the form
\[
P(z)=c_0+c_1z+\cdots+c_nz^n,\qquad c_n\ne0.
\]
A point $\zeta\in\C$ is called a \emph{critical point} of $P$ if $P'(\zeta)=0$, 
and the corresponding value $P(\zeta)$ is called a \emph{critical value} of $P$.

In his celebrated 1981 paper \cite{Smale81}, Smale proved that for every complex polynomial $P$ of degree $n\ge2$ and every noncritical point $z$ there is a critical point $\zeta$ with
\begin{equation}\label{eq:smale-4}
\left|\frac{P(z)-P(\zeta)}{z-\zeta}\right| \le 4 |P'(z)|,
\end{equation}
and conjectured that $4$ can be replaced by $1$ (indeed $1-1/n$ is best possible). It is easy (see~\cite{BMN02}) to show that Smale's conjecture is equivalent to the following normalized  conjecture:
\begin{conj}[Smale's mean value conjecture]
Let $P(z)$ be a complex polynomial of degree $n\ge2$ satisfying $P(0)=0$ and $P'(0)=1$. 
Then there exists a critical point $\zeta$ of $P$ such that 
\[
\left|\frac{P(\zeta)}{\zeta}\right|\le 1-\frac1n.
\]
\end{conj}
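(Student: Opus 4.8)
The plan is to pass to the normalized derivative and recast the conjecture as a sharp extremal problem. Writing the critical points of $P$ as $\zeta_1,\dots,\zeta_{n-1}$ (listed with multiplicity), the normalization $P(0)=0$, $P'(0)=1$ lets me take
\[
P'(z)=\prod_{k=1}^{n-1}\left(1-\frac{z}{\zeta_k}\right),\qquad P(z)=\int_0^z P'(t)\,dt,
\]
and set $s_k=P(\zeta_k)/\zeta_k$, so that the goal is $\min_k\abs{s_k}\le 1-\tfrac1n$. A first, purely algebraic line of attack is to integrate the product termwise, express each $s_k$ through the elementary symmetric functions of the $\zeta_k$, and try to bound the geometric mean $\bigl(\prod_k\abs{s_k}\bigr)^{1/(n-1)}$, since any bound on the product bounds the minimum. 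I expect this route to yield only a weaker explicit constant (of the shape $4^{(n-1)/n}$ or similar): the product of the Smale factors is simply not small enough, and the averaging discards exactly the information needed to reach the sharp value $1-1/n$.

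To capture the sharp constant I would instead set up a variational problem. Define $S(P)=\min_k\abs{s_k}$ on the space of normalized degree-$n$ polynomials and let $M=\sup_P S(P)$, so that the conjecture reads $M\le 1-1/n$. First I would show the supremum is attained: after controlling the degenerations that make some $\abs{s_k}$ small (critical points colliding, or escaping to infinity), a normal-families argument produces an extremal $P_\ast$ with $S(P_\ast)=M$. At such an extremum, a first-order perturbation of $P_\ast$ within the normalized family together with Lagrange multipliers enforcing the constraints should force an equioscillation-type rigidity: several factors $\abs{s_k}$ must simultaneously equal $M$ with aligned phases. This pins $P_\ast$ down, up to the rotational symmetries $z\mapsto\omega z$, to the single candidate $P(z)=z-\frac{z^n}{n}$, whose $n-1$ critical points are the $(n-1)$st roots of unity and for which every Smale factor satisfies $P(\zeta_k)/\zeta_k=1-\tfrac1n$ exactly; thus the equioscillation is perfect and $S(P_\ast)=1-\tfrac1n$, as required.

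The hard part — and, candidly, the reason this statement is stated as a conjecture rather than a theorem — is the rigidity step: proving that the first-order extremal conditions admit \emph{no} competing critical configuration besides this candidate, and that the true global maximizer is exactly $z-z^n/n$ rather than some boundary or degenerate object. The critical-value map $P\mapsto(s_1,\dots,s_{n-1})$ is strongly nonlinear, and the variational equations, while consistent with the conjectured extremizer, do not on their own exclude other solutions; closing this gap uniformly in $n$ is the central obstacle. A realistic intermediate target is to establish the sharp bound for structurally restricted families — for instance polynomials carrying a prescribed symmetry, where the number of independent Smale factors drops and the extremal analysis becomes tractable — and then to attempt to remove the symmetry hypothesis.
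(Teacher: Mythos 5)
There is a genuine gap here, and it is one you yourself flag: the statement you were asked about is an open conjecture, stated as such in the paper, and the paper offers no proof of it (its actual theorem concerns the \emph{dual} inequality, for odd polynomials, proved via the substitution $P(z)=zQ(z^2)$ and Dubinin's bound). Your proposal is therefore correctly structured as a research program, not a proof, and the program stalls exactly where you say it does. Two steps are unproved. First, the rigidity step: the first-order (Lagrange/equioscillation) conditions at a putative extremal $P_\ast$ are necessary conditions only, and nothing in your argument excludes competing critical configurations of the nonlinear map $P\mapsto(s_1,\dots,s_{n-1})$; asserting that these conditions ``pin $P_\ast$ down'' to $z-z^n/n$ is precisely the content of the conjecture, so the argument is circular at its crux. (Tischler's work on conservative polynomials, cited in the paper, shows the perfectly equioscillating case can be handled, but that is a restricted class.) Second, even the compactness step is not routine: $S(P)=\min_k|P(\zeta_k)/\zeta_k|$ need not behave well under the degenerations you mention --- critical points colliding or tending to infinity can make $S$ fail to be upper semicontinuous on the normalized family, so the existence of a maximizer requires a genuine argument (e.g.\ a quantitative bound showing near-extremal polynomials have critical points in a fixed compact annulus), which you sketch only as ``controlling the degenerations.''

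Two smaller points. Your first paragraph's expectation is consistent with the literature: averaging/product arguments yield constants like $4$ (Smale's original \eqref{eq:smale-4}) or $4^{(n-1)/n}$-type improvements, never the sharp $1-1/n$, so abandoning that route is sound. And your closing suggestion --- attack structurally restricted families with symmetry first --- is in fact what the literature does: Ng's theorem for odd polynomials (constant $2$ in place of $4$) and the paper's Corollary~\ref{cor:k} for polynomials with $P(\lambda z)\equiv\lambda P(z)$ are instances of exactly this strategy, though on the dual side the symmetry reduction succeeds completely while on the original Smale side it has so far only improved the constant. In short: your outline identifies the right extremal candidate and the right obstacles, but it does not prove the statement, and no proof exists in the paper or elsewhere.
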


The conjecture has resisted proof despite extensive efforts by many leading mathematicians. Partial estimates have been obtained for several special families of polynomials (see \cite{BMN02, HK10}), yet the general case remains unresolved. 

About three decades later, a dual version of Smale's mean value conjecture was independently proposed by Dubinin--Sugawa~\cite{DubininSugawa09} and Ng:

\begin{conj}[Dual Smale's mean value conjecture]
For every complex polynomial $P(z)$ of degree $n\ge2$ such that $P(0)=0$ and $P'(0)=1$, there exists a critical point $\zeta$ of $P$ such that 
\[
\left|\frac{P(\zeta)}{\zeta}\right|\ge \frac1n.
\]
\end{conj}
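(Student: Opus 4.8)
The plan is to replace the existence statement by a single scalar inequality and then invoke the pigeonhole principle. The normalization forces $P(z)=z+c_2z^2+\cdots+c_nz^n$ with $c_n\neq0$; I write the critical points as $\zeta_1,\dots,\zeta_{n-1}$, the zeros of $P'(z)=nc_n\prod_{j=1}^{n-1}(z-\zeta_j)$. That $1/n$ is the correct constant is already visible in degree $2$, where $P(z)=z+c_2z^2$ has the single critical point $\zeta=-1/(2c_2)$ with $P(\zeta)/\zeta=\tfrac12$ identically; so equality is attained, and the natural quantity to bound from below is the geometric mean of the $n-1$ ratios rather than any individual one.

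First I would evaluate the product $T:=\prod_{j=1}^{n-1}\frac{P(\zeta_j)}{\zeta_j}$ in closed form. Reading off the constant term of $P'$ gives $\prod_j\zeta_j=(-1)^{n-1}/(nc_n)$, and the resultant identities $\prod_jP(\zeta_j)=\operatorname{Res}(P',P)/(nc_n)^n$ together with $\operatorname{Res}(P,P')=c_n^{\,n-1}\prod_{k=1}^{n-1}P'(r_k)$ — the latter using $P'(0)=1$, so that the factor from the root $0$ of $P$ drops out — collapse after sign bookkeeping to
\[
|T|=\frac{1}{n^{\,n-1}}\prod_{k=1}^{n-1}\bigl|P'(r_k)\bigr|,
\]
where $r_1,\dots,r_{n-1}$ are the nonzero roots of $P$. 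Since the largest of the $n-1$ ratios dominates their geometric mean,
\[
\max_{1\le j\le n-1}\left|\frac{P(\zeta_j)}{\zeta_j}\right|\ \ge\ |T|^{1/(n-1)}\ =\ \frac1n\Bigl(\prod_{k=1}^{n-1}\bigl|P'(r_k)\bigr|\Bigr)^{1/(n-1)},
\]
so the whole conjecture would follow from the clean root inequality $\prod_{k=1}^{n-1}|P'(r_k)|\ge1$, which holds with equality when $n=2$.

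The main obstacle lives exactly here, because this root inequality is false in general. For $n=3$ one computes $\prod_k|P'(r_k)|=|r_1-r_2|^2/(|r_1|\,|r_2|)$, which tends to $0$ as the two nonzero roots coalesce: a repeated nonzero root is itself a critical point with critical value $0$, so one ratio vanishes and the product bound collapses. Hence the product method is lossy precisely when one or more critical values are small, and there $T$ alone is uninformative. The genuine content of the conjecture — and the reason it remains open — is to show that the surviving critical values must compensate, i.e.\ that the ratios cannot all be small at once; this requires a global coupling between the positions of the roots $r_k$ and of the critical points $\zeta_j$ that no single multiplicative identity can encode.

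To get past the degenerate cases I would pass from the product to a compactness argument: after renormalization the family of degree-$n$ polynomials with $\max_j|P(\zeta_j)/\zeta_j|\le 1/n-\varepsilon$ should be precompact, so a putative counterexample would yield an extremal polynomial whose variational (Lagrange) conditions equalize the attained ratios and rigidify the critical-point geometry, against which the root inequality could be re-examined directly. The odd polynomials of this paper are the tractable case: the symmetry $P(-z)=-P(z)$ makes critical points pair as $\pm\zeta$ with $P(-\zeta)/(-\zeta)=P(\zeta)/\zeta$ and nonzero roots pair as $\pm r$, halving the free parameters and severely constraining the possible degenerations. Even there, however, colliding root-pairs still void the naive product, so the decisive and hardest step is the direct argument that forbids simultaneous smallness of all critical-value ratios once the multiplicative bound has broken down.
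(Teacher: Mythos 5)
You were asked to prove a statement that the paper itself records as a \emph{conjecture}: the Dual Smale's mean value conjecture is open in general (the paper notes it is known only for $n\le 7$ and for special classes such as conservative polynomials), and your text, by its own admission, does not close it either. To give credit where due: your multiplicative identity is correct. With $\operatorname{Res}(P',P)=(nc_n)^n\prod_j P(\zeta_j)$, $\operatorname{Res}(P,P')=c_n^{\,n-1}\prod_k P'(r_k)$ (the factor from the root $0$ of $P$ being $P'(0)=1$), and $\prod_j \zeta_j=(-1)^{n-1}/(nc_n)$, one indeed gets $|T|=n^{-(n-1)}\prod_k |P'(r_k)|$, and the geometric-mean step $\max_j |P(\zeta_j)/\zeta_j|\ge |T|^{1/(n-1)}$ is sound. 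But, as you correctly observe, the auxiliary inequality $\prod_k|P'(r_k)|\ge 1$ is false (your $n=3$ computation $|r_1-r_2|^2/(|r_1|\,|r_2|)$ is right), so the method yields nothing once a critical value is small. The proposed rescue---precompactness of the family with $\max_j|P(\zeta_j)/\zeta_j|\le 1/n-\varepsilon$, an extremal polynomial, Lagrange conditions---is only gestured at: no compactness is established (normalized coefficients are a priori unbounded, so degeneration to lower degree must be controlled), no extremal object is produced, and no variational equations are derived. That program is precisely the open-problem territory occupied by the cited work of Hinkkanen--Kayumov--Khammatova for $n\le 7$; the gap in your proposal is, in effect, the entire conjecture.

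For contrast with what the paper actually proves: it establishes the conjecture only for odd polynomials (Theorem~\ref{thm:main}), and not via the $\pm$-symmetry parameter count you sketch at the end. The paper writes $P(z)=zQ(z^2)$, forms $H(u)=uQ(u)^2$ so that $H'(u)=Q(u)\left(Q(u)+2uQ'(u)\right)$, and applies Dubinin's unconditional bound \eqref{eq:dubinin-1n2} to $H$ at $a=0$, producing a critical point $w$ of $H$ with $|H(w)/w|\ge 1/d^2$; since the bound is positive, $Q(w)\ne0$, forcing $Q(w)+2wQ'(w)=0$, whence $c=\pm\sqrt{w}$ is a critical point of $P$ with $|P(c)/c|=|Q(w)|=\sqrt{|H(w)/w|}\ge 1/d$. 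The square root coming from $Q(u)^2$ is exactly what upgrades the known universal constant $1/d^2$ to the conjectured $1/d$ on this subclass---an idea absent from your proposal. Note also that the symmetry alone does not ``severely constrain the degenerations'' in any way your product identity can exploit: colliding root pairs kill $T$ for odd polynomials just as in the general case, which is why the paper routes through Dubinin's theorem rather than through any multiplicative identity.
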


The Dual Smale's conjecture is known to be true for $n\le7$. 
The cases $2\le n\le4$ were proved in~\cite{DubininSugawa09, Tischler89}. The cases $n=5$ and $n=6$ were proved by Hinkkanen, Kayumov, and Khammatova~\cite{HKK2019}
using the idea of selecting a critical point of minimal modulus; the same approach extends to $n=7$, for which a rigorous proof was given later in~\cite{HKK2025}. Recently, Kayumov and Khammatova~\cite{KKhJMAA} proposed a new conjectured inequality implying the Dual Smale's conjecture and verified it for all polynomials whose critical points lie on a single ray. Earlier, Tischler~\cite{Tischler89} established the conjecture for the special class of \emph{conservative polynomials}, characterized by \(
P'(\zeta)=0 \Rightarrow P(\zeta)/\zeta=\mathrm{const}
\). As for general bounds, Dubinin and Sugawa~\cite{DubininSugawa09} first proved a weakened form with constant $\frac{1}{n4^{n}}$; later, Ng and Zhang~\cite{NgZhang16} improved the exponential bound to $\frac{1}{4^{n}}$. A polynomial-order lower bound was then obtained by Dubinin~\cite{DubininSurvey,Dubinin19}:

\begin{thm}\label{thm:dubinin}
Let $P$ be a complex polynomial of degree $n\ge2$. Then for every $z\in\C$ there exists a critical point $\zeta$ of $P$ such that
\[
\left|\frac{P(z)-P(\zeta)}{z-\zeta}\right|\ge 
\frac{1}{n}\tan\!\frac{\pi}{4n} |P'(z)|.
\]
Moreover, by a more complicated technique (see \cite[Theorem~3.9 and the remark to it]{DubininSurvey}), the factor $\tan(\pi/(4n))$ can be replaced by $1/n$; equivalently,
\begin{equation}\label{eq:dubinin-1n2}
\left|\frac{P(z)-P(\zeta)}{z-\zeta}\right|\ge \frac{1}{n^2} |P'(z)|.
\end{equation}
\end{thm}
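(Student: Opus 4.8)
The plan is to reduce the statement to the normalized dual problem and then attack the latter by potential-theoretic and conformal-mapping estimates, which is where the constants $\tan(\pi/(4n))$ and $1/n^2$ will originate.

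\textbf{Step 1 (Normalization).} First I would dispose of the trivial case in which $z$ is a critical point: then the right-hand side vanishes and any critical point makes the left-hand side nonnegative. For noncritical $z$, I translate and rescale by setting $Q(w)=\bigl(P(w+z)-P(z)\bigr)/P'(z)$, so that $Q(0)=0$, $Q'(0)=1$, and $\deg Q=n$. The critical points of $Q$ are exactly $\omega_j=\zeta_j-z$, where $\zeta_j$ ranges over the critical points of $P$, and a direct computation gives
\[
\left|\frac{Q(\omega_j)}{\omega_j}\right| = \frac{1}{|P'(z)|}\left|\frac{P(z)-P(\zeta_j)}{z-\zeta_j}\right|.
\]
Thus the theorem is equivalent to the assertion that some critical point $\omega$ of the normalized polynomial $Q$ satisfies $|Q(\omega)/\omega|\ge \tfrac1n\tan\tfrac{\pi}{4n}$ (respectively $\ge 1/n^2$). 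Writing $Q'(w)=\prod_{j=1}^{n-1}(1-w/\omega_j)$ merely records the normalization $Q'(0)=1$.

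\textbf{Step 2 (Conformal set-up).} I would then study the single-valued branch $\psi$ of $Q^{-1}$ determined by $\psi(0)=0$, which satisfies $\psi'(0)=1/Q'(0)=1$. It extends univalently over any simply connected domain avoiding the critical values $Q(\omega_j)$, in particular over the disc $\{\,|w|<V\,\}$ with $V=\min_j|Q(\omega_j)|$, its boundary value at the nearest critical value being the corresponding critical point. The point of the normalization $Q'(0)=1$ is that it pins down the conformal radius of this inverse branch at $0$; the location of the critical values is then constrained by how much "room" a degree-$n$ branched cover can leave while respecting that conformal size. I would make this quantitative through a comparison of condenser capacities under $Q$: the $n$-sheeted covering relates the capacity of a condenser built from $0$ and the system of critical values to the fixed reduced modulus at $0$, and equating the two should yield a numerical lower bound for the smallest admissible ratio $|Q(\omega_j)/\omega_j|$.

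\textbf{Step 3 (The constant) and the main obstacle.} The routine part is Step~1; the genuine difficulty is the capacity inequality of Step~2 and, above all, the identification of the extremal configuration. I expect $\tan(\pi/(4n))$ to emerge from the extremal arrangement in which the critical values sit at the tips of $n$ symmetric radial slits (the symmetry being suggested by the known extremal polynomial $\tfrac1n\bigl((1+w)^n-1\bigr)$, all of whose critical values coincide): mapping out the resulting sector of opening $\pi/(2n)$ by a Möbius transformation introduces $\tan$ of the half-angle $\pi/(4n)$. Upgrading $\tan(\pi/(4n))$ to the stronger $1/n^2$ requires replacing the crude capacity estimate by a sharp one obtained through symmetrization (a separating/dissymmetrization transformation) that redistributes the critical values into their true extremal position. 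The hard parts will be justifying this symmetrization with the correct sheet multiplicity and controlling the degenerate geometry when several critical values collide, or a critical value meets a zero of $Q$, so that the extremal configuration is attained only in a limiting sense.
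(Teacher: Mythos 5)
The paper does not prove this statement at all: Theorem~\ref{thm:dubinin} is quoted as a known result of Dubinin, with the proof deferred entirely to \cite{DubininSurvey} and \cite{Dubinin19}. So the only comparison available is between your sketch and Dubinin's published argument, and on that comparison your proposal correctly identifies the machinery he actually uses --- reduction to the normalized problem $P(0)=0$, $P'(0)=1$, $z=0$, followed by capacity estimates for condensers transported through the $n$-sheeted covering $P$, with a symmetrization/dissymmetrization step to reach the extremal configuration. Step~1 of your proposal is complete and correct (and is the only step that is): the affine change of variable is routine, and the degenerate case $P'(z)=0$ is rightly dismissed.

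The genuine gap is that Steps~2 and~3 contain no proof, only a description of where a proof would have to come from. The entire quantitative content of the theorem --- the inequality relating the reduced modulus at $0$ to the capacity of the condenser determined by the critical values, the factor by which capacity changes under an $n$-sheeted branched cover, and the verification that the slit configuration you describe is in fact extremal --- is precisely what you defer as ``the main obstacle.'' In particular, the constant $\tan(\pi/(4n))$ is asserted to ``emerge from'' a sector of opening $\pi/(2n)$, but no computation is given that produces the prefactor $\tfrac1n$ in $\tfrac1n\tan\tfrac{\pi}{4n}$, and the upgrade to $1/n^2$ is attributed to an unspecified ``sharp'' symmetrization whose justification you explicitly acknowledge you do not have (multiplicity of sheets, collisions of critical values, degeneration to a limiting configuration). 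As written, the proposal is a research plan pointing at Dubinin's method rather than a proof; if the intent is to use this theorem as a black box --- which is all the present paper needs --- the honest course is to cite it, as the paper does, rather than to sketch an incomplete derivation.
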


Inequality~\eqref{eq:dubinin-1n2} provides the best known universal lower bound for general complex polynomials.  

Our goal in this paper is to prove that the Dual Smale's mean value conjecture holds for all odd polynomials with nonzero linear term.
This family was previously investigated by Ng~\cite{NgOdd} in connection with the classical Smale's mean value conjecture, where he established the corresponding inequality with constant~$2$. Here we show that, in the dual setting, the conjectured constant~$1/n$ indeed holds for all such polynomials.

% Our goal in this paper is to show that the Dual Smale's conjecture holds for all odd polynomials with a nonzero linear term. This class was previously studied by Ng~\cite{NgOdd} in connection with the classical Smale's conjecture, where he proved the inequality with constant~$2$ for such polynomials. Here we show that in the dual direction, the conjectured constant~$1/n$ is true for all of them.

\begin{thm}\label{thm:main}
Let $P$ be an odd polynomial of degree $d\ge3$ with $P(0)=0$ and $P'(0)=1$. Then
\[
\max_{P'(\zeta)=0}\left|\frac{P(\zeta)}{\zeta}\right| \ge \frac1d.
\]
\end{thm}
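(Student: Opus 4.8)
The plan is to exploit the oddness of $P$ by passing to the squared variable $w=z^2$, which collapses the $2m$ critical points (occurring in antipodal pairs $\pm\zeta_1,\dots,\pm\zeta_m$, where $m=(d-1)/2$ and $d=2m+1$) to the $m$ roots of a single polynomial. Writing $P(z)=zR(z^2)$ with $R(0)=1$ and $\deg R=m$, differentiation gives $P'(z)=Q(z^2)$, where $Q(w):=R(w)+2wR'(w)$ satisfies $Q(0)=1$ and $\deg Q=m$, and the critical points of $P$ are exactly the $\zeta$ with $Q(\zeta^2)=0$. Normalizing $Q(w)=\prod_{k=1}^m(1-w/w_k)$ with $w_1,\dots,w_m$ its roots, a critical point $\zeta_j=\sqrt{w_j}$ gives $P(\zeta_j)/\zeta_j=R(w_j)$, and its antipodal partner $-\zeta_j$ yields the same ratio. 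Thus the theorem is equivalent to $\max_j|R(w_j)|\ge 1/(2m+1)$.

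The second step is an integral representation of these ratios. Since $P'(t)=Q(t^2)$, the substitution $t=s\zeta_j$ in $P(\zeta_j)=\int_0^{\zeta_j}P'(t)\,dt$ gives, for every root $w_j$ of $Q$,
\[
R(w_j)=\frac{P(\zeta_j)}{\zeta_j}=\int_0^1 Q(s^2 w_j)\,ds=\int_0^1\prod_{k=1}^m\Bigl(1-\frac{s^2w_j}{w_k}\Bigr)\,ds ,
\]
which is equivalently the coefficientwise identity $r_k=q_k\int_0^1 s^{2k}\,ds$ obtained by solving $Q=R+2wR'$. This representation is the heart of the matter: it writes each critical ratio as an average of $Q$ along the segment from $0$ to $w_j$, and it is exactly here, through the even integrand $Q(s^2w_j)$ and the resulting weight, that oddness enters.

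Following the minimal-modulus selection of Hinkkanen--Kayumov--Khammatova, I would then fix an index $\ell$ with $|w_\ell|\le|w_k|$ for all $k$ and prove the single bound $|R(w_\ell)|\ge 1/(2m+1)$, which extends their idea to all degrees thanks to the odd structure. For this choice each ratio $x_k:=w_\ell/w_k$ satisfies $|x_k|\le1$, so every factor $1-s^2x_k$ lies in the closed disc of radius $s^2\le1$ about $1$ and in particular has nonnegative real part. Writing $R(w_\ell)=\int_0^1(1-s^2)\prod_{k\ne\ell}(1-s^2x_k)\,ds$, the integrand is affine in each $x_k$, so $\operatorname{Re}R(w_\ell)$ is harmonic in each $x_k$; by the minimum principle its infimum over the polydisc $\{|x_k|\le1\}$ is attained on the distinguished boundary $|x_k|=1$. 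It then suffices to bound $\operatorname{Re}R(w_\ell)$ below by $1/(2m+1)$ when all $x_k$ are unimodular, after which $|R(w_\ell)|\ge\operatorname{Re}R(w_\ell)\ge 1/(2m+1)$ closes the argument.

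The main obstacle is precisely this last estimate. There is ample room in the inequality: even the degenerate case $Q(w)=(1-w/w_\ell)^m$ gives $R(w_\ell)=\int_0^1(1-s^2)^m\,ds\asymp m^{-1/2}$, dwarfing the target $1/(2m+1)$. Yet crude term-by-term bounds fail for large $m$: estimating $\prod_{k\ne\ell}(1-s^2x_k)$ by $(1+s^2)^{m-1}$ discards the essential correlation between the modulus and the argument of each factor (a factor can be large only when its argument is small), and the resulting spurious negative contribution near $s=1$ grows like $2^m/m$. The real work is therefore the honest torus minimization: one must show that, with each $x_k$ unimodular, the oscillation of $\prod_{k\ne\ell}(1-s^2x_k)$ is controlled by the weight $(1-s^2)$, which suppresses the problematic region $s\to1$, strongly enough to keep $\operatorname{Re}R(w_\ell)\ge 1/(2m+1)$ for every $m$. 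I expect this estimate, rather than any of the structural reductions above, to be the crux of the proof.
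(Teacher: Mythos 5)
Your structural reductions are correct and partly overlap with the paper's first step: writing $P(z)=zR(z^2)$, noting that critical points of $P$ correspond to roots $w_j$ of $Q=R+2wR'$, and that $P(\zeta_j)/\zeta_j=R(w_j)$, so the theorem is equivalent to $\max_j|R(w_j)|\ge 1/d$. The integral representation $R(w_j)=\int_0^1 Q(s^2w_j)\,ds$ and the polydisc/minimum-principle relaxation are also sound. But the proof has a genuine gap, and you name it yourself: the torus estimate $\operatorname{Re}\int_0^1(1-s^2)\prod_{k\ne\ell}(1-s^2x_k)\,ds\ \ge\ 1/(2m+1)$ for unimodular $x_k$ is never proved, only conjectured to be the ``crux.'' This is not a routine verification left to the reader; it is exactly the kind of estimate that makes the minimal-modulus strategy of Hinkkanen--Kayumov--Khammatova hard, which is why their method has only been pushed through degree $7$ for general polynomials. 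Nothing in your argument shows the claimed lower bound holds for every $m$, nor even that the real part stays positive under adversarial choices of the arguments $\theta_k$ (the weight $(1-s^2)$ damps the region $s\to1$, but the product can already oscillate with negative real part on a set of $s$ bounded away from $1$ once $m$ is large). So the proposal reduces the theorem to another open-ended extremal problem rather than resolving it.

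The paper avoids this entirely by a short trick you are missing: instead of analyzing a selected critical point, apply Dubinin's known universal bound $\bigl|\frac{F(a)-F(b)}{a-b}\bigr|\ge\frac{1}{n^2}|F'(a)|$ (inequality~\eqref{eq:dubinin-1n2}) at $a=0$ to the auxiliary polynomial $H(u)=uR(u)^2$ (in your notation), which has degree exactly $d=2m+1$ and satisfies $H'(0)=1$. This produces a critical point $w$ of $H$ with $|H(w)/w|\ge 1/d^2$; since $H(w)\ne0$, $w$ cannot be a root of $R$, hence $H'=R\cdot(R+2uR')$ forces $Q(w)=0$, i.e.\ $w=c^2$ with $P'(c)=0$. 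Then $|H(w)/w|=|R(w)|^2\ge1/d^2$ gives $|P(c)/c|=|R(w)|\ge1/d$: the squaring of $R$ inside $H$ converts the weak $1/d^2$ bound into the sharp-order $1/d$ bound. If you want to salvage your approach, you would need to actually carry out the torus minimization for all $m$, which is substantially harder than the theorem itself as proved in the paper.
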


\begin{proof}
Write $P(z)=z Q(z^2)$; here $Q$ is a polynomial of degree $\frac{d-1}{2}$ with $Q(0)=P'(0)=1$.
Set
\begin{equation}\label{eq:H-and-R}
H(u):=u Q(u)^2,\qquad 
R(u):=Q(u)+2uQ'(u).
\end{equation}
Then
\begin{equation}\label{eq:Hprime}
H'(u)=Q(u)\left(Q(u)+2uQ'(u)\right)=Q(u)R(u),
\end{equation}
so the critical points of $H$ are exactly $\{Q=0\}\cup\{R=0\}$. Note also that
\begin{equation}\label{eq:H-at-0}
H(0)=0,\qquad H'(0)=Q(0)^2=1.
\end{equation}

By inequality~\eqref{eq:dubinin-1n2} from Theorem~\ref{thm:dubinin}: for any polynomial $F$ of degree $n$ and any $a\in\C$ there exists a critical point $b$ of $F$ such that
\begin{equation}\label{eq:Dub-1overn2}
\left|\frac{F(a)-F(b)}{a-b}\right| \ge \frac{1}{n^2} |F'(a)|.
\end{equation}
Apply inequality~\eqref{eq:Dub-1overn2} to $F=H$ with $n=d$ and $a=0$. Using \eqref{eq:H-at-0}, we obtain a critical point $w$ of $H$ such that
\begin{equation}\label{eq:apply-to-H}
\left|\frac{H(w)}{w}\right| \ge \frac{1}{d^2} |H'(0)|=\frac{1}{d^2}.
\end{equation}
Since the right-hand side is positive, $H(w)\neq0$; hence $w\notin\{Q=0\}$ and by \eqref{eq:Hprime} necessarily $w\in\{R=0\}$. But then $H(w)=w\,Q(w)^2$, and \eqref{eq:apply-to-H} gives
\[
|Q(w)| =  \sqrt{\left|\frac{H(w)}{w}\right|}  \ge  \frac{1}{d}.
\]
Choose $c\in\{\pm\sqrt{w}\}$. From $R(w)=0$ and \eqref{eq:H-and-R} we have
\[
P'(c)=Q(w)+2wQ'(w)=0,\qquad 
\left|\frac{P(c)}{c}\right|=\left|Q(w)\right| \ge \frac{1}{d}.
\]
Hence $\max_{P'(\zeta)=0}\left|P(\zeta)/\zeta\right|\ge 1/d$, completing the proof. \end{proof}

\begin{remark}
We point out that Theorem~\ref{thm:main} cannot be attained with equality. 
Indeed, assuming the full Dual Smale's mean value conjecture holds for all degree-$d$ polynomials, 
one can repeat the argument of Theorem~\ref{thm:main} for the auxiliary function $H(u)=uQ(u)^2$ to obtain $\max_{P'(\zeta)=0}|P(\zeta)/\zeta|\ge 1/\sqrt{d}$. This conditional strengthening implies that global equality 
$\max_{P'(\zeta)=0}|P(\zeta)/\zeta|=1/d$ cannot occur in the odd subclass for any $d\ge3$.
\end{remark}

In the spirit of Ng~\cite[Note added in proof]{NgOdd}, the following dual variant holds for polynomials $P$ satisfying $P(\lambda z)\equiv \lambda P(z)$, where $\lambda$ is a primitive $k$th root of unity.

\begin{cor}\label{cor:k}
Let $P$ be a polynomial of degree $d\ge2$ with $P(0)=0$ and $P'(0)=1$. 
Assume there exists a primitive $k$th root of unity $\lambda$ such that $P(\lambda z)\equiv \lambda\,P(z)$. Then
\[
\max_{P'(\zeta)=0}\left|\frac{P(\zeta)}{\zeta}\right| \ge \frac{1}{d^{2/k}}.
\]
\end{cor}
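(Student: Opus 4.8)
The plan is to mimic the proof of Theorem~\ref{thm:main}, replacing the squaring by a $k$th power. First I would exploit the symmetry hypothesis $P(\lambda z)\equiv\lambda P(z)$ to constrain which monomials appear in $P$: writing $P(z)=\sum_j c_j z^j$, the identity forces $c_j(\lambda^j-\lambda)=0$ for every $j$, and since $\lambda$ is a primitive $k$th root of unity this gives $c_j=0$ unless $j\equiv1\pmod k$. Consequently $P(z)=zQ(z^k)$ for a polynomial $Q$ with $Q(0)=P'(0)=1$, and because the leading exponent $d$ satisfies $d\equiv1\pmod k$, the degree of $Q$ equals $(d-1)/k$.

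Next I would introduce the auxiliary function
\[
H(u):=uQ(u)^k,
\]
the natural $k$th-power analogue of the $uQ(u)^2$ used in Theorem~\ref{thm:main}. A direct differentiation gives $H'(u)=Q(u)^{k-1}\bigl(Q(u)+kuQ'(u)\bigr)$, so $H'$ factors through $R(u):=Q(u)+kuQ'(u)$, and one checks $H(0)=0$ and $H'(0)=Q(0)^{k-1}R(0)=1$. Since $\deg Q=(d-1)/k$, the degree of $H$ is exactly $1+k\cdot\frac{d-1}{k}=d$. The crucial link with $P$ is the relation $P'(z)=R(z^k)$, so that every root $u$ of $R$ furnishes, via any $c$ with $c^k=u$, a critical point of $P$ satisfying $P(c)/c=Q(c^k)=Q(u)$.

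With these preparations in place I would apply Dubinin's bound~\eqref{eq:dubinin-1n2} to $F=H$ with $n=d$ and $a=0$, producing a critical point $w$ of $H$ with $|H(w)/w|\ge 1/d^2$. Because the right-hand side is positive, $H(w)\ne0$, whence $Q(w)\ne0$ and therefore $R(w)=0$. Then $H(w)=wQ(w)^k$ yields $|Q(w)|^k\ge 1/d^2$, i.e. $|Q(w)|\ge d^{-2/k}$. Choosing any $c$ with $c^k=w$ produces a critical point of $P$ with $|P(c)/c|=|Q(w)|\ge d^{-2/k}$, which is the claimed bound.

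The only genuinely delicate point is the bookkeeping of the $k$th-power substitution: verifying that $H$ has degree exactly $d$ (and not larger), that the normalization $H'(0)=1$ survives, and that the $k$ preimages of $w$ under $z\mapsto z^k$ are all legitimate critical points of $P$ carrying the same value $|P(c)/c|$. Once these are confirmed, the argument is formally identical to the $k=2$ case of Theorem~\ref{thm:main}, which it specializes to, since $d^{2/k}=d$ when $k=2$.
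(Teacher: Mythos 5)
Your proposal is correct and follows exactly the route the paper takes: it is the proof of Theorem~\ref{thm:main} with $P(z)=zQ(z^k)$ and $H(u)=uQ(u)^k$ in place of $P(z)=zQ(z^2)$ and $H(u)=uQ(u)^2$, which is precisely what the paper's (terser) proof indicates. Your extra bookkeeping (the symmetry forcing $c_j=0$ unless $j\equiv1\pmod k$, $\deg H=d$, $H'(0)=1$, and $P'(z)=R(z^k)$) is accurate and simply fills in details the paper leaves implicit.
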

\begin{proof}
The proof is identical to that of Theorem~\ref{thm:main}, 
except that $P(z)=zQ(z^k)$ and $H(u)=uQ(u)^k$ replace $P(z)=zQ(z^2)$ and $H(u)=uQ(u)^2$. 
Applying inequality~\eqref{eq:Dub-1overn2} to $H$ yields 
a critical point $\zeta$ with $|P(\zeta)/\zeta|\ge d^{-2/k}$.
\end{proof}
Therefore every polynomial satisfying the assumptions of Corollary~\ref{cor:k} 
indeed satisfies the Dual Smale's mean value conjecture.

% \begin{proof}
% By the symmetry we may write
% \[
% P(z)=z\,Q(z^{k}),\qquad Q(0)=P'(0)=1,\qquad \deg Q=\frac{d-1}{k}.
% \]
% Set
% \[
% H(u):=u\,Q(u)^{k},\qquad R_k(u):=Q(u)+k\,uQ'(u).
% \]
% Then
% \[
% H'(u)=Q(u)^{k-1}\bigl(Q(u)+k\,uQ'(u)\bigr)=Q(u)^{k-1}R_k(u),
% \]
% so the critical set of $H$ is $\{Q=0\}\cup\{R_k=0\}$. Also $H(0)=0$ and $H'(0)=Q(0)^{k}=1$.
% Applying inequality~\eqref{eq:Dub-1overn2} to to $F=H$ with $n=d$ and $a=0$ gives a critical point $w$ of $H$ such that
% \[
% \frac{|H(w)|}{|w|}\ \ge\ \frac{1}{d^{2}}\,|H'(0)|=\frac{1}{d^{2}}.
% \]
% Hence $H(w)\ne0$, so $w\notin\{Q=0\}$ and thus $w\in\{R_k=0\}$. 
% Now $H(w)=w\,Q(w)^{k}$, so $|Q(w)|\ge d^{-2/k}$. 
% Choose $c\in\C$ with $c^{k}=w$. Using $P'(c)=Q(w)+k\,wQ'(w)=R_k(w)=0$, we obtain
% \[
% \left|\frac{P(c)}{c}\right|=\bigl|Q(w)\bigr|\ \ge\ \frac{1}{d^{2/k}}.
% \qedhere\]
% \end{proof}

\section*{Acknowledgements}
The author would like to thank Prof.~Minghua Lin for helpful comments and suggestions.

\end{document}